\newtheorem{theorem}{Theorem}[section]
\newtheorem{lemma}[theorem]{Lemma}
\theoremstyle{definition}
\newtheorem{definition}[theorem]{Definition}
\theoremstyle{proposition}
\newtheorem{proposition}[theorem]{Proposition}
\theoremstyle{corollary}
\numberwithin{subsection}{section}
\numberwithin{equation}{section}
\begin{document}

\title{Degenerate abelian function fields}

\author{Yukitaka \textsc{Abe}}
\address{Graduate School of Science and Engineering for Research, 
University of Toyama, Toyama 930-8555, Japan}
\email{abe@sci.u-toyama.ac.jp}


\subjclass[2010]{Primary 
32A20; Secondary 30D30, 14H40}

\keywords{degenerate abelian functions, algebraic addition theorem, singular curves, generalized Jacobi varieties, Albanese varieties,
}


\begin{abstract}
Originally, an abelian function field is the field of meromorphic functions
on the Jacobi variety $J(X)$ of a compact Riemann surface $X$.
It is generated by the fundamental abelian functions belonging to the
meromorphic function field on $X$.
We study this relation for singular curves.
\end{abstract}

\maketitle

\section{Introduction}
Abelian functions were discovered by solving Jacobi's inversion problem.
They are meromorphic functions on the Jacobi variety $J(X)$ of a
compact Riemann surface $X$. 
We denote by
${\rm Mer}(J(X))$ the field of meromorphic functions
on $J(X)$.
Let ${\rm Mer}(X)$ be the field of meromorphic functions on $X$.
The field ${\rm Mer}(J(X))$ is generated by the
fundamental abelian functions belonging to ${\rm Mer}(X)$.
We study this relation for singular curves.\par
In general, abelian function fields are fields of meromorphic functions
on abelian varieties.
We also know that an abelian function field admits an algebraic addition
theorem. Weierstrass stated the following in his lectures in Berlin
(see \cite{ref8}):\\
Every system of $n$ (independent) functions in $n$ variables which
admits an algebraic addition theorem is an algebraic combination of
$n$ abelian (or degenerate abelian) functions with the same periods.\\
However, he did not publish his proof of the above statement. We did
not know the precise meaning of degenerate abelian functions at that time.\par
We determined meromorphic function fields which admit an algebraic
addition theorem in \cite{ref2} and \cite{ref3}. However, our statement
was in a little weak form. We give its final form (Theorem 2.5)  in this paper.\par
Another purpose of this paper is to complete the last section in the
previous paper \cite{ref6}. 
Let $S$ be a finite subset of a compact Riemann surface $X$.
We construct a singular curve
$X_{{\mathfrak m}}$ from $X$ by a modulus
${\mathfrak m}$ with support $S$.
Let $A = {\rm Alb}^{an}(X_{{\mathfrak m}})$ be the analytic Albanese
variety of $X_{{\mathfrak m}}$. We proved in \cite{ref6} that the
period map $\varphi : X \setminus S \longrightarrow A$ gives a
bimeromorphic map $\varphi : (X \setminus S)^{(\pi )} \longrightarrow A$,
where $\pi $ is the genus of $X_{{\mathfrak m}}$ and $(X \setminus S)^{(\pi )}$
is the symmetric product of $X \setminus S$ of degree $\pi $.
The analytic Albanese variety $A$ has the standard compactification
$\overline{A}$ which is projective algebraic. Let ${\rm Mer}(\overline{A})$
be the field of meromorphic functions on $\overline{A}$. Its restriction
${\rm Mer}(\overline{A})|_{A}$ onto $A$ is a function field which admits
an algebraic addition theorem. We show that
${\rm Mer}\left((X_{{\mathfrak m}})^{(\pi )}\right) \cong {\rm Mer}(\overline{A})$
through $\varphi $, where $(X_{{\mathfrak m}})^{(\pi )}$ is the symmetric
product of $X_{{\mathfrak m}}$ of degree $\pi $ (Theorem 5.2).\par
We use notations in the previous paper \cite{ref6}.

\section{Algebraic addition theorem}
Let $K$ be a subfield of the meromorphic function field
${\rm Mer}({\mathbb C}^n)$ on ${\mathbb C}^n$. We consider the
following condition (T) concerning the transcendence degree
${\rm Trans}_{{\mathbb C}}K$ of $K$ over ${\mathbb C}$.\par
\medskip
\noindent
(T) $K$ is finitely generated over ${\mathbb C}$ and 
${\rm Trans}_{{\mathbb C}}K = n$.\par
\medskip
\noindent
If $K$ satisfies condition (T), then we can take functions
$f_0, f_1, \dots , f_n \in K$ with $K = {\mathbb C}
(f_0, f_1, \dots , f_n)$.

\begin{definition}
Let $K = {\mathbb C}(f_0, f_1, \dots , f_n)$ be a subfield
of ${\rm Mer}({\mathbb C}^n)$ satisfying condition
{\rm (T)}. We say that $K$ admits an algebraic addition
theorem (this is abbreviated to {\rm (AAT)}) if for any
$j = 0, 1, \dots , n$ there exists a rational function $R_j$
such that
\begin{equation}
f_j(z + w) = R_j(f_0(z), f_1(z), \dots , f_n(z), f_0(w),
f_1(w), \dots ,f_n(w))
\end{equation}
for all $z,w \in {\mathbb C}^n$.
\end{definition}

The above definition does not depend on the choice of
generators $f_0, f_1, \dots , f_n$ of $K$.\par
Any connected commutative complex Lie group $G$ of dimension $n$ is represented as
\begin{equation*}
 G = {\mathbb C}^n/\Gamma = {\mathbb C}^p \times
({\mathbb C}^{*})^q \times ({\mathbb C}^r / \Gamma _0),
\end{equation*}
where ${\mathbb C}^r/\Gamma _0$ is a toroidal group and
$p+q+r = n$. By the standard compactification
$\overline{{\mathbb C}^r/\Gamma _0}$ of ${\mathbb C}^r/\Gamma _0$
we obtain a compactification
\begin{equation*}
\overline{G} = ({\mathbb P}^1)^{p} \times ({\mathbb P}^1)^{q}
\times \overline{{\mathbb C}^r/\Gamma _0}
\end{equation*}
of $G$, which is called the standard compactification of
$G$ (for details, see \cite{ref4}).\par
Let $f \in {\rm Mer}({\mathbb C}^n)$. We define the period group $\Gamma _f$ of $f$ by
$$\Gamma _f := \{ \gamma \in {\mathbb C}^n;\, f(z + \gamma ) =
f(z)\quad \text{for all $z \in {\mathbb C}^n$}\}.$$

\begin{definition}
A meromorphic function $f$ on ${\mathbb C}^n$ is said to be
non-degenerate if its period group $\Gamma _f$ is discrete.
\end{definition}

For a subfield $K$ of ${\rm Mer}({\mathbb C}^n)$ we denote by
$\Gamma _{K} := \bigcap _{f \in K}\Gamma _f$ the period group of $K$.
A subfield $K$ is said to be non-degenerate if it has a non-degenerate
meromorphic function. We stated the following lemma without proof in
\cite{ref5}. Here we give its proof for the convenience of readers.

\begin{lemma}
A subfield $K$ is non-degenerate if and only if $\Gamma _{K}$ is discrete.
\end{lemma}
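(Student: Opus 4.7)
The direction ``$K$ non-degenerate $\Rightarrow$ $\Gamma_K$ discrete'' is immediate: if some $f_0\in K$ satisfies $\Gamma_{f_0}$ discrete, then $\Gamma_K=\bigcap_{f\in K}\Gamma_f\subseteq\Gamma_{f_0}$ is a subgroup of a discrete group, hence discrete.

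For the converse, assume $\Gamma_K$ is discrete. The key initial observation is that for every $f\in\mathrm{Mer}(\mathbb{C}^n)$, the identity component $V_f:=\Gamma_f^0$ is not merely a real linear subspace of $\mathbb{C}^n$ but in fact a complex one: if $\gamma\in V_f$, then $t\gamma\in\Gamma_f$ for all $t\in\mathbb{R}$, so the one-variable meromorphic function $t\mapsto f(z+t\gamma)$ equals the constant $f(z)$ on $\mathbb{R}$ and hence on all of $\mathbb{C}$, forcing $\mathbb{C}\gamma\subseteq\Gamma_f$. Since the identity component of an intersection of closed subgroups of $\mathbb{C}^n$ equals the intersection of their identity components, $\Gamma_K^0=\bigcap_{f\in K}V_f$; the hypothesis that $\Gamma_K$ is discrete forces $\Gamma_K^0=0$, so $\bigcap_{f\in K}V_f=0$. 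As each $V_f$ is a complex subspace of the finite-dimensional $\mathbb{C}^n$, a standard descending-dimension argument then yields finitely many $f_1,\dots,f_k\in K$ with $V_{f_1}\cap\cdots\cap V_{f_k}=0$.

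It remains to construct a \emph{single} $f\in K$ with $V_f=0$, and this is the heart of the matter. Linear combinations of the $f_i$ fail in general---on $\mathbb{C}^2$ the functions $z_1,z_2$ satisfy $V_{z_1}\cap V_{z_2}=0$ yet $V_{z_1+cz_2}\neq 0$ for every $c\in\mathbb{C}$, while the product $z_1z_2$ succeeds---so one must exploit the full field structure. I would argue by induction: given $f,g\in K$ with $V_g\not\supseteq V_f$, produce $h$ in the subfield $\mathbb{C}(f,g)\subseteq K$ with $\dim_{\mathbb{C}}V_h<\dim_{\mathbb{C}}V_f$, then iterate until the dimension drops to zero. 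The natural candidate is $h=f+cg^m$ for suitable integer $m\geq 1$ and $c\in\mathbb{C}^*$: the directional-derivative identity $L_h(\gamma)=L_f(\gamma)+cmg^{m-1}L_g(\gamma)$ shows that any $\gamma\in V_h\cap V_f$ has $L_f(\gamma)\equiv 0$, hence $L_g(\gamma)\equiv 0$, so $V_h\cap V_f\subseteq V_f\cap V_g\subsetneq V_f$.

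The main technical obstacle is to choose $(c,m)$ so that, in addition, no stray $\gamma\notin V_f$ accidentally enters $V_h$. Such a $\gamma$ would have to satisfy the meromorphic identity $L_f(\gamma)/L_g(\gamma)=-cmg^{m-1}$; since $g$ is non-constant (as $V_g\neq\mathbb{C}^n$), the various $g^{m-1}$ are $\mathbb{C}$-linearly independent, so each candidate $\gamma$ pins the pair $(c,m)$ uniquely. I would use this rigidity to argue that the ``bad'' parameter set is a proper subset of $\mathbb{C}^*\times\mathbb{Z}_{\geq 1}$, so that a generic choice produces the desired strict reduction $\dim V_h<\dim V_f$ and completes the induction.
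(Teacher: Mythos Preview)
Your forward direction and the preliminary reductions are fine: the observation that each $V_f=\Gamma_f^0$ is a complex subspace, that $\bigcap_{f\in K}V_f=0$, and that finitely many $f_1,\dots,f_k$ already have $\bigcap V_{f_i}=0$ are all correct and set up a reasonable inductive scheme. The gap is in the core step, exactly where you flag it yourself. Your rigidity observation---that a fixed stray $\gamma\notin V_f\cup V_g$ with $\partial_\gamma f/\partial_\gamma g=-cmg^{m-1}$ determines $(c,m)$ uniquely---is correct, but it does not imply that the bad parameter set is proper: the candidate $\gamma$'s range over an uncountable set, so nothing yet prevents the bad pairs from covering all of $\mathbb{C}^*\times\mathbb{Z}_{\ge1}$. (Already for $m=1$ your own example $f=z_1,\ g=z_2$ shows every $c$ is bad, so you must at least restrict $m$, and for general meromorphic $f,g$ one needs a genuine argument---e.g.\ that for large $m$ the images of $\gamma\mapsto\partial_\gamma f$ and $\gamma\mapsto g^{m-1}\partial_\gamma g$ become linearly independent in $\mathrm{Mer}(\mathbb{C}^n)$, forcing $V_h=V_f\cap V_g$.) As written, the phrase ``I would use this rigidity to argue\dots'' is a hope, not a proof.

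The paper avoids this difficulty by organising the induction differently. Instead of choosing an arbitrary $g$ with $V_g\not\supseteq V_f$, it restricts attention to the subspace $V_1=V_f$ and applies the inductive hypothesis on $\dim V_1<n$ to find $g\in K$ whose \emph{restriction} $g|_{V_1}$ is already non-degenerate; it then works with products $h_m=fg^m$ rather than sums. The point is that with this stronger choice of $g$, one can track the successive degenerate subspaces $U_1,U_2,\dots$ of $fg,fg^2,\dots$ concretely: each $U_k$ lies in a fixed complementary subspace, $h_k|_{U_k}$ is constant while $g|_{U_k}$ is non-degenerate, so $h_{k+1}=h_kg$ is non-degenerate on a strictly larger direct sum. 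Dimension forces termination after finitely many steps. Your additive ansatz $f+cg^m$ may well work too, but it needs a comparable finiteness mechanism that you have not supplied.
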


\begin{proof}
It is obvious by the definition that if $K$ is non-degenerate, then $\Gamma _{K}$
is discrete.\par
Conversely, suppose that $\Gamma _{K}$ is discrete.
By induction on $n$ we show that $K$ has a non-degenerate function.\par
When $n = 1$, the statement is trivial by the uniqueness theorem.\par
Let $n > 1$. We assume that it holds for subfields of ${\rm Mer}({\mathbb C}^k)$
with $k < n$. Take a non-constant $f \in K$. If $\Gamma _f$ is discrete, then
$f$ is a non-degenerate function. 
If $\Gamma _f$ is not discrete, then there exist complex linear subspaces
$V_1$ and $W_1$ such that ${\mathbb C}^n = V_1 \oplus W_1$ with
$0 < \dim V_1 < n$ and $\Gamma _f = V_1 \oplus \Lambda _1$, where
$\Lambda _1$ is a discrete subgroup of $W_1$. Since $\Gamma _{K|_{V_1}}$ is
discrete, there exists $g \in K$ such that $\Gamma _{g|_{V_1}}$ is discrete
by the assumption of induction.\par
Assume that $\Gamma _g$ is not discrete. Then there exist complex linear subspaces
$V_2$ and $W_2$ such that ${\mathbb C}^n = V_2 \oplus W_2$ and
$\Gamma _g = V_2 \oplus \Lambda _2$, where $\Lambda _2$ is a discrete
subgroup of $W_2$. We have $W_2 \supset V_1$ by the choice of $g$.
We set $h_1 := fg$. Then $h_1$ is non-degenerate on $V_1 \oplus V_2$,
because $f$ is constant on $V_1$ and $g$ is constant on $V_2$.
If ${\mathbb C}^n = V_1 \oplus V_2$, then $h_1$ is the desired function.
Otherwise, there exists a positive dimensional complex linear subspace $W$ such that
${\mathbb C}^n = V_1 \oplus V_2 \oplus W$.\par
If $\Gamma _{h_1}$ is not discrete, then there exist a complex linear
subspace $U_1$ and a discrete subgroup $\Gamma _1$ such that
$\Gamma _{h_1} = U_1 \oplus \Gamma _1$. Since $h_1$ is non-degenerate
on $V_1 \oplus V_2$, we have $U_1 \subset W$. Let
$h_2 := h_1g = fg^2$. Then $h_2$ is non-degenerate on $V_1 \oplus V_2$
by the same reason as above. On $U_1$ we have 
$h_2|_{U_1} = (h_1|_{U_1}) (g|_{U_1})$. Since $h_1|_{U_1}$ is constant and
$g|_{U_1}$ is non-degenerate, $\Gamma _{h_2|_{U_1}}$ is discrete.
Hence, $h_2$ is non-degenerate on $V_1 \oplus V_2 \oplus U_1$.
If $\Gamma _{h_2}$ is not discrete, we consider $h_3 := h_2 g = fg^3$.
Repeating this procedure, we finally obtain $m \in {\mathbb N}$ such
that $fg^m$ is non-degenerate.
\end{proof}

Let $G = {\mathbb C}^n/\Gamma $ be a connected commutative complex
Lie group as above. We denote by ${\rm Mer}(G)$ the field of meromorphic
functions on $G$. Let $\sigma : {\mathbb C}^n \longrightarrow G$ be
the projection. Then, for any subfield $K$ of ${\rm Mer}({\mathbb C}^n)$
with $\Gamma \subset \Gamma _{K}$ there exists a subfield $\kappa $
of ${\rm Mer}(G)$ such that $K = \sigma ^{*}\kappa $.
Let ${\rm Mer}(\overline{G})$ be the field of meromorphic functions on
the standard compactification $\overline{G}$ of $G$. We denote by
${\rm Mer}(\overline{G})|_{G}$ the restriction of ${\rm Mer}(\overline{G})$
onto $G$.

\begin{definition}[\cite{ref5}]
A subfield $K$ of ${\rm Mer}({\mathbb C}^n)$ is said to be
a {\rm W}-type subfield if $K = \sigma ^{*}({\rm Mer}(\overline{G})|_G)$,
where $G = {\mathbb C}^p \times ({\mathbb C}^{*})^q \times
{\mathcal Q}$ with an $r$-dimensional quasi-abelian variety
${\mathcal Q}$ of kind 0, $n = p + q + r$ and $\sigma : {\mathbb C}^n
\longrightarrow G$ is the projection
(for the definition of quasi-abelian varieties of kind 0, see
\cite{ref4}).
\end{definition}

The following theorem is the final form of the Weierstrass
statement (cf. \cite{ref2} and \cite{ref3}). We have already obtained enough material
to prove it in the previous papers (\cite{ref1}, \cite{ref2} and \cite{ref3}).

\begin{theorem}[cf. Theorem 1.1 in \cite{ref3}]
Let $K$ be a non-degenerate subfield of ${\rm Mer}({\mathbb C}^n)$
satisfying condition {\rm (T)}. If $K$ admits {\rm (AAT)},
then there exists a ${\mathbb C}$-linear isomorphism
$\Phi : {\mathbb C}^n \longrightarrow {\mathbb C}^n$ such that
$\Phi ^{*}K$ is a {\rm W}-type subfield.
\end{theorem}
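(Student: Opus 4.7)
The plan is to descend $K$ to a function field on the quotient Lie group $G = {\mathbb C}^n/\Gamma _{K}$, invoke the structure theorem for connected commutative complex Lie groups together with the weaker form of the statement from \cite{ref2} and \cite{ref3}, and then upgrade the conclusion so that the toroidal factor is a quasi-abelian variety of kind $0$ and the descended field is precisely ${\rm Mer}(\overline{G})|_{G}$.

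First, by Lemma 2.3, non-degeneracy gives that $\Gamma _{K}$ is discrete, so $G := {\mathbb C}^n/\Gamma _{K}$ is a connected commutative complex Lie group of dimension $n$. There is then a subfield $\kappa \subset {\rm Mer}(G)$ with $K = \sigma ^{*}\kappa $, where $\sigma : {\mathbb C}^n \to G$ is the projection; condition (T) transfers to $\kappa $, and (AAT) says that each generator of $\kappa $ is a rational function in the generators composed with the group law of $G$. The structure theorem then writes $G \cong {\mathbb C}^p \times ({\mathbb C}^{*})^q \times ({\mathbb C}^r/\Gamma _0)$ with $p+q+r = n$, where ${\mathbb C}^r/\Gamma _0$ is toroidal.

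The second step invokes the weak form of the theorem from \cite{ref2} and \cite{ref3}, which supplies a ${\mathbb C}$-linear isomorphism $\Phi $ after which the toroidal factor is already a quasi-abelian variety ${\mathcal Q}$ and $\kappa $ sits inside ${\rm Mer}(G)$ compatibly with this decomposition. What remains is to show (a) that ${\mathcal Q}$ is of kind $0$, and (b) that $\kappa = {\rm Mer}(\overline{G})|_{G}$. For (a), since ${\rm Trans}_{{\mathbb C}}{\rm Mer}({\mathcal Q}) = \dim {\mathcal Q}$ holds exactly when ${\mathcal Q}$ is of kind $0$, the hypothesis ${\rm Trans}_{{\mathbb C}}\kappa = n$ forces the toroidal factor to carry a maximal meromorphic function field and hence to be of kind $0$. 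For (b), along the ${\mathbb C}^p$ and $({\mathbb C}^{*})^q$ factors, AAT together with finite generation forces the corresponding coordinate variables to enter rationally, so the generators extend meromorphically to $({\mathbb P}^1)^p \times ({\mathbb P}^1)^q$; along the ${\mathcal Q}$-factor the equality ${\rm Mer}({\mathcal Q}) = {\rm Mer}(\overline{\mathcal Q})|_{{\mathcal Q}}$ for quasi-abelian varieties of kind $0$ (see \cite{ref4}) provides the missing extension across $\overline{\mathcal Q}$. A comparison of transcendence degrees then promotes the inclusion $\kappa \supset {\rm Mer}(\overline{G})|_{G}$ to an equality, after which pulling back by $\sigma $ yields the W-type conclusion.

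The main obstacle, in my view, is step (b): the kind $0$ property and AAT must interact to rule out essential singularities of the generators along the boundary divisor of $\overline{G}$, so that every element of $\kappa $ genuinely extends to a meromorphic function on the standard compactification. This is precisely the refinement needed to pass from the weak form of \cite{ref3} to the W-type statement, and the careful bookkeeping along the three factors of $\overline{G}$ simultaneously is where I expect the real work to lie.
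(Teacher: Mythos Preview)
Your strategy differs substantially from the paper's, and the divergence is precisely at your step~(b). The paper does not try to extend elements of $\kappa$ across the boundary of the standard compactification by hand. Instead it invokes the main result of \cite{ref1}: after a linear change of coordinates there is a connected commutative complex Lie group $\Omega = {\mathbb C}^n/\Gamma$ \emph{embedded in a projective space} with $K \cong {\mathbb C}(Y)$, where $Y$ is the Zariski closure of $\Omega$. This already realizes $K$ as the rational function field of a projective variety, so the extension problem you single out as ``the main obstacle'' never arises. Writing $\Omega = {\mathbb C}^p \times ({\mathbb C}^{*})^q \times {\mathcal Q}$ with ${\mathcal Q}$ quasi-abelian, the paper then applies Theorem~4.5 of \cite{ref2} (the Zariski closure of the closed subgroup ${\mathcal Q}$ has the same dimension as ${\mathcal Q}$) together with Theorem~8.1 of \cite{ref3} to conclude that ${\mathcal Q}$ is of kind~$0$; once that is known, $Y$ is identified with $({\mathbb P}^1)^p \times ({\mathbb P}^1)^q \times \overline{\mathcal Q} = \overline{\Omega}$, and $K$ is W-type by definition.

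Beyond the obstacle you already flag, your outline has two further soft spots. First, your criterion for kind~$0$ --- namely ${\rm Trans}_{{\mathbb C}}\,{\rm Mer}({\mathcal Q}) = \dim {\mathcal Q}$ --- is not the characterization the paper uses and would need independent justification; the paper's argument through the Zariski-closure dimension is exactly what Theorem~8.1 of \cite{ref3} is designed to accept as input. Second, even if your extension argument in~(b) goes through, it yields only $\kappa \subset {\rm Mer}(\overline{G})|_{G}$ (the inclusion you wrote is in the wrong direction), and equality of transcendence degrees makes the larger field a finite extension of $\kappa$, not equal to it. The paper avoids this entirely because $K \cong {\mathbb C}(Y)$ from \cite{ref1} is an equality of function fields from the outset, not a mere inclusion to be upgraded.
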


\begin{proof}
We showed the following results in \cite{ref1}.
There exists a connected commutative complex Lie group
$\Omega = {\mathbb C}^n/\Gamma $ embedded in a complex
projective space such that $K \cong {\mathbb C}(Y)$, where $Y$
is the Zariski closure of $\Omega $. Furthermore we can represent $\Omega $
as $\Omega = {\mathbb C}^p \times ({\mathbb C}^{*})^q \times {\mathcal Q}$,
where ${\mathcal Q} = {\mathbb C}^r/\Gamma _0$ is a quasi-abelian variety.
Since ${\mathcal Q}$ is a connected closed complex Lie subgroup,
the Zariski closure $Z$ of ${\mathcal Q}$ has the same dimension as
${\mathcal Q}$ by Theorem 4.5 in \cite{ref2}. Therefore ${\mathcal Q}$
is a quasi-abelian variety of kind 0 (Theorem 8.1 in \cite{ref3}). Let
$\overline{{\mathcal Q}}$ be the standard compactification of
${\mathcal Q}$. Then we obtain
$Y = ({\mathbb P}^1)^p \times ({\mathbb P}^1)^q \times \overline{{\mathcal Q}}$.
Hence $K$ is a W-type subfield.
\end{proof}

%

\section{Singular curves}
Let $X$ be 
 a compact Riemann surface with the structure sheaf ${\mathcal O}_X$.
Take a finite subset
$S$ of $X$. We consider an equivalence
relation $R$ on $S$. We define the quotient set
$\overline{S} := S/R$ of $S$ by $R$. We set
$$\overline{X} := (X \setminus S)\cup \overline{S}.$$
We induce to $\overline{X}$ the quotient topology
by the canonical projection $\rho : X \longrightarrow \overline{X}$.
Then $\overline{X}$ is a compact Hausdorff space.

\begin{definition}[\cite{ref9}]
A modulus ${\mathfrak m}$ with support $S$ is the data of an
integer ${\mathfrak m}(P)>0$ for each point $P \in S$.
\end{definition}

Let $\rho _* {\mathcal O}_X$ be the direct image of
${\mathcal O}_X$ by the projection
$\rho : X \longrightarrow \overline{X}$.
For any $Q \in \overline{S}$ we denote by ${\mathcal I}_Q$
the ideal of $(\rho _* {\mathcal O}_X)_Q$ formed by 
functions $f$ with ${\rm ord}_P(f) \geq {\mathfrak m}(P)$ for
any $P \in \rho ^{-1}(Q)$.
We define a sheaf ${\mathcal O}_{\mathfrak m}$ on $\overline{X}$ by
$$
{\mathcal O}_{{\mathfrak m} , Q} :=
\begin{cases}
(\rho _*{\mathcal O}_X)_Q = {\mathcal O}_{X, Q}&
\mbox{if $Q \in X \setminus S$},\\
{\mathbb C} + {\mathcal I}_Q&
\mbox{if $Q \in \overline{S}$}.
\end{cases}$$
Then we obtain a 1-dimensional compact reduced complex
space $(\overline{X}, {\mathcal O}_{\mathfrak m})$, which
we denote by $X_{{\mathfrak m}}$.
\par
Conversely, any reduced and irreducible singular curve is obtained
as above.\par
Let $g$ be the genus of $X$. For any $Q \in X_{{\mathfrak m}}$ we set
$\delta _{Q} := \dim \left((\rho _{*}{\mathcal O}_X)_{Q}/{\mathcal O}_{{\mathfrak m}, Q}\right)$.
The genus of $X_{{\mathfrak m}}$ is defined by $\pi := g + \delta$,
where
$\delta := \sum_{Q \in X_{{\mathfrak m} }} \delta _Q $.\par
We denote by $\Omega _{{\mathfrak m}}$ the duality sheaf on $X_{{\mathfrak m}}$
(see Section 3.1 in \cite{ref6}). We have
$\dim H^0(X_{{\mathfrak m}}, \Omega _{{\mathfrak m}}) = \dim
H^1(X_{{\mathfrak m}}, {\mathcal O}_{{\mathfrak m}}) = \pi $ (cf. \cite{ref6}).

\section{Analytic Albanese varieties}

Let $X_{\mathfrak m}$ be a singular curve of genus $\pi = g + \delta $ constructed from $X$
by a modulus ${\mathfrak m}$ with support $S$, 
where $g$ is the genus of $X$. Take a basis
$\{ \omega _1, \dots , \omega _{\pi}\}$ of $H^0(X_{\mathfrak m},\Omega _{\mathfrak m})$.
We fix a canonical homology basis $\{ \alpha _1, \beta _1, \dots , \alpha _g,
\beta _g \}$ of $X$. Let $S = \{ P_1, \dots , P_s\}$.
We denote by $\gamma _j$ a small circle centered at $P_j$ with anticlockwise
direction for $j = 1, \dots ,s$. Then the set $\{ \alpha _1, \beta _1, \dots ,
\alpha _g, \beta _g, \gamma _1, \dots , \gamma _{s-1}\}$ forms a basis of
$H_1(X \setminus S, {\mathbb Z}) = H_1(X_{\mathfrak m} \setminus \overline{S},
{\mathbb Z})$. Let $H^0(X_{\mathfrak m},\Omega _{\mathfrak m})^{*}$ be the dual space of
$H^0(X_{\mathfrak m},\Omega _{\mathfrak m})$. We set
$$A := H^0(X_{\mathfrak m},\Omega _{\mathfrak m})^{*}/
H_1(X_{\mathfrak m} \setminus \overline{S},{\mathbb Z}).$$
Consider $2g + s -1$ vectors
$$\left(\int _{\alpha _i}\rho ^{*}\omega _1, \dots
, \int _{\alpha _i}\rho ^{*}\omega _{\pi }\right),\quad i = 1, \dots , g,$$
$$\left(\int _{\beta _i}\rho ^{*}\omega _1, \dots
, \int _{\beta _i}\rho ^{*}\omega _{\pi }\right),\quad i = 1, \dots , g$$
and
$$\left(\int _{\gamma _j}\rho ^{*}\omega _1, \dots
, \int _{\gamma _j}\rho ^{*}\omega _{\pi }\right),\quad j = 1, \dots , s-1$$
in ${\mathbb C}^{\pi}$. Let $\Gamma $ be a subgroup of ${\mathbb C}^{\pi}$
generated by these vectors over ${\mathbb Z}$.
Then $\Gamma $ is a discrete subgroup of ${\mathbb C}^{\pi }$.
We have $A \cong {\mathbb C}^{\pi }/\Gamma $ as a complex Lie group.
We call $A$ with the structure as a complex Lie group the analytic Albanese
variety of $X_{{\mathfrak m}}$, and write it as
${\rm Alb}^{an}(X_{{\mathfrak m}})$.
The theorem of Remmert-Morimoto says that
$$ A \cong
{\mathbb C}^{\pi }/\Gamma \cong {\mathbb C}^p \times
({\mathbb C}^{*})^q \times G,$$
where $G$ is a toroidal group of dimension $r$ and $p + q + r = \pi $.
In \cite{ref6} we showed that $G$ is a quasi-abelian variety of kind 0.
Then we write
$$ A =
{\rm Alb}^{an}(X_{{\mathfrak m}})  =  {\mathbb C}^p \times
({\mathbb C}^{*})^q \times {\mathcal Q},$$
where ${\mathcal Q}$ is an $r$-dimensional quasi-abelian variety
of kind 0.
We define a period map $\varphi $ with base point
$P_0 \in X \setminus S$ by 
$$\varphi : X \setminus S \longrightarrow
A,\quad
P \longmapsto \left[ \left( \int _{P_0}^{P}\rho ^{*} \omega _1, \dots ,
\int _{P_0}^{P}\rho ^{*} \omega _{\pi}\right)\right].$$
The period map $\varphi $ is extended to a bimeromorphic map
$\varphi : (X \setminus S)^{(\pi )} \longrightarrow A$, where
$(X \setminus S)^{(\pi )}$ is the symmetric product of
$X \setminus S$ of degree $\pi $ (Theorem 5.19 in \cite{ref6}).

\section{Degenerate abelian function fields}
We set $K_{{\mathfrak m}} := \rho ^{*}{\rm Mer}(X_{{\mathfrak m} }) \subset {\rm Mer}(X).$
Then there exist $x,y \in K_{{\mathfrak m}}$ such that $K_{{\mathfrak m}} = {\mathbb C}(x,y).$
Since $x$ and $y$ are algebraically dependent, we have an irreducible
polynomial $f$ such that $f(x,y) = 0.$ Let $C$ be the closure of
$\{ f = 0 \}$ in ${\mathbb P}^1 \times {\mathbb P}^1.$
The $\pi $-dimensional complex projective space ${\mathbb P}^{\pi }$ is
identified with the symmetric product $({\mathbb P}^1)^{(\pi)}$ of
${\mathbb P}^1$ of degree $\pi $ by the map induced from the following
rational map
$$\tau : ({\mathbb P}^1)^{\pi} \ni (x_1, \dots , x_{\pi}) \longmapsto
(a_0 : a_1 :\cdots :a_{\pi}) \in {\mathbb P}^{\pi},$$
$$\frac{a_1}{a_0} = - \sum _{i=1}^{\pi}x_i,\ 
\frac{a_2}{a_0} = \sum _{i<j}x_ix_j,\ \dots,\ 
\frac{a_{\pi}}{a_0} = (-1)^n \prod_{i=1}^{\pi}x_i,$$
where $(x_1, \dots , x_{\pi})$ is the inhomogeneous coordinates of
$({\mathbb P}^1)^{\pi}$ and $(a_0: a_1: \cdots : a_{\pi})$ is the homogeneous
coordinates of ${\mathbb P}^{\pi}$.
By the definition of $C$ we have a holomorphic map
$\mu : X \longrightarrow C$. This gives a holomorphic map
$$\mu _0 : X^{\pi } \longrightarrow C^{\pi } \subset ({\mathbb P}^1 \times {\mathbb P}^1)^{\pi } \cong
({\mathbb P}^1)^{\pi } \times ({\mathbb P}^1)^{\pi }.$$
Let $\sigma _0 : X^{\pi } \longrightarrow X^{(\pi )}$ be the canonical projection.
Then we obtain the following
commutative diagram:
\begin{equation*}
\begin{CD}
X^{\pi } @>\mu >>  C^{\pi } \subset ({\mathbb P}^1 \times {\mathbb P}^1)^{\pi }
\cong ({\mathbb P}^1)^{\pi } \times ({\mathbb P}^1)^{\pi }\\
@V{\sigma _0}VV              @VV{\tau \times \tau }V\\
X^{(\pi )}   @>\hat{\mu}>>     {\mathbb P}^{\pi } \times {\mathbb P}^{\pi } \\
\cup  @.
  @V{\sigma _1}V{\sigma _2}V \\
(X \setminus S)^{(\pi )} @.  {\mathbb P}^{\pi }\quad {\mathbb P}^{\pi }\\
@V{\varphi}VV @. @. \\
A = {\rm Alb}^{an}(X_{{\mathfrak m} }) @.  @.   \\
@A{\sigma } AA @.   @.   \\
{\mathbb C}^{\pi } @.   @.  \\
\end{CD}
\end{equation*}
where $\hat{\mu } : X^{(\pi )} \longrightarrow {\mathbb P}^{\pi } \times
{\mathbb P}^{\pi }$ is the holomorphic map induced from $\mu $ and
$\sigma _i :  {\mathbb P}^{\pi } \times {\mathbb P}^{\pi } \longrightarrow
{\mathbb P}^{\pi }$ is the projection onto the $i$-th component for
$i = 1, 2$. Therefore we can define 
meromorphic maps
$\wp ^x : {\mathbb C}^{\pi } \longrightarrow {\mathbb P}^{\pi }$ and
$\wp ^y : {\mathbb C}^{\pi } \longrightarrow {\mathbb P}^{\pi }$ by
$$\wp ^x := \sigma _1 \circ \hat{\mu}\circ \varphi ^{-1} \circ \sigma \quad
\text{and}\quad
\wp ^y := \sigma _2 \circ \hat{\mu}\circ \varphi ^{-1} \circ \sigma .$$
If we represent $\wp ^x$ and $\wp ^y$ in homogeneous coordinates of
${\mathbb P}^{\pi }$ as
$$\wp ^x(z) = (1: \xi _1(z): \cdots : \xi _{\pi}(z)) \quad
\text{and}\quad
\wp ^y(z) = (1: \eta _1(z): \cdots : \eta _{\pi }(z)),$$
then $\xi _1(z), \dots , \xi _{\pi }(z), \eta _1(z), \dots , \eta _{\pi }(z) \in \sigma ^{*}
{\rm Mer}(A).$
Let $[z]$ be a generic point of $A$, where $z \in {\mathbb C}^{\pi }$.
Then there exists uniquely $(P_1, \dots , P_{\pi }) \in
(X \setminus S)^{(\pi )}$ with $\varphi ((P_1, \dots , P_{\pi })) = [z]$.
In this case we have
$$\xi _1(z) = - \sum _{i=1}^{\pi }x(P_i),\ 
\xi _2(z) = \sum _{i<j}x(P_i)x(P_j), \dots ,\ 
\xi _{\pi }(z) = (-1)^{\pi }\prod _{i=1}^{\pi }x(P_i),$$
$$\eta _1(z) = - \sum _{i=1}^{\pi } y(P_i),\ 
\eta _2 (z) = \sum _{i<j}y(P_i)y(P_j), \dots ,\  
\eta _{\pi }(z) = (-1)^{\pi }\prod _{i=1}^{\pi }y(P_i).$$
We denote
$K := {\mathbb C}(\xi _1(z), \dots , \xi _{\pi }(z),
\eta _1(z), \dots , \eta _{\pi }(z))\subset \sigma ^{*}{\rm Mer}(A)$. Let $\kappa $ be the corresponding subfield of
${\rm Mer}(A)$, i.e. $K = \sigma ^{*}\kappa$.
Then we have $\varphi ^{*}\kappa = {\rm Mer}((X_{{\mathfrak m}})^{(\pi )})$
 by the above relation, where
$(X_{{\mathfrak m}})^{(\pi )}$ is the symmetric product of $X_{{\mathfrak m}}$
of degree $\pi $. We note that $(X_{{\mathfrak m}})^{(\pi )}$ is an irreducible
projective algebraic variety (cf. III. 14 in \cite{ref9}).
We may consider ${\rm Mer}((X_{{\mathfrak m}})^{(\pi )})$ as
a subfield of ${\rm Mer}(X^{(\pi )})$.
It is obvious that $\varphi ^{*}\kappa $ is finitely generated over
${\mathbb C}$ and ${\rm Trans}_{{\mathbb C}}\varphi ^{*}\kappa = \pi $.

\begin{proposition}
$K$ admits {\rm (AAT)}.
\end{proposition}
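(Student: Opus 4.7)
The plan is to transport the group law on $A$ to a rational self-map on the projective algebraic variety $(X_{\mathfrak m})^{(\pi)}$ via $\varphi$, and then read off (AAT) from the algebraic geometry of function fields of algebraic varieties.

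I would begin by forming the composition
\[
\beta := \varphi^{-1} \circ \alpha \circ (\varphi \times \varphi),
\]
where $\alpha : A \times A \longrightarrow A$ is the group addition of the commutative complex Lie group $A$. On the dense open subset $(X \setminus S)^{(\pi)} \times (X \setminus S)^{(\pi)}$, where $\varphi$ is a biholomorphism onto a dense open of $A$, the composition $\beta$ is well defined and meromorphic with values in $(X_{\mathfrak m})^{(\pi)}$. Since $(X_{\mathfrak m})^{(\pi)}$ is an irreducible projective algebraic variety, the closure of the graph of $\beta$ in $(X_{\mathfrak m})^{(\pi)} \times (X_{\mathfrak m})^{(\pi)} \times (X_{\mathfrak m})^{(\pi)}$ is analytic, hence algebraic by Chow's theorem. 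Thus $\beta$ upgrades to a rational map of algebraic varieties
\[
\beta : (X_{\mathfrak m})^{(\pi)} \times (X_{\mathfrak m})^{(\pi)} \longrightarrow (X_{\mathfrak m})^{(\pi)}.
\]

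Next, I would use the identifications recorded in the excerpt: $K = \sigma^{*}\kappa$ and $\varphi^{*}\kappa = {\rm Mer}((X_{\mathfrak m})^{(\pi)})$. Writing $\tilde{\xi}_i, \tilde{\eta}_j \in \kappa$ for the functions on $A$ with $\xi_i = \sigma^{*}\tilde{\xi}_i$, $\eta_j = \sigma^{*}\tilde{\eta}_j$, the functions $\xi_i^{\flat} := \varphi^{*}\tilde{\xi}_i$ and $\eta_j^{\flat} := \varphi^{*}\tilde{\eta}_j$ generate the algebraic function field ${\rm Mer}((X_{\mathfrak m})^{(\pi)})$. For the irreducible projective variety $(X_{\mathfrak m})^{(\pi)} \times (X_{\mathfrak m})^{(\pi)}$ the function field is the compositum, over $\mathbb{C}$, of the two pullbacks via the projections $p_1, p_2$; hence it is generated by the $4\pi$ functions $p_k^{*}\xi_i^{\flat}$ and $p_k^{*}\eta_j^{\flat}$ for $k = 1, 2$ and $1 \le i, j \le \pi$. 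Since $\beta$ is a rational map of algebraic varieties, each of the $2\pi$ functions $\beta^{*}\xi_i^{\flat}$ and $\beta^{*}\eta_j^{\flat}$ is a rational expression in those generators.

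Finally, I would pull back by $\sigma \times \sigma : \mathbb{C}^{\pi} \times \mathbb{C}^{\pi} \longrightarrow A \times A$ and use $\sigma(z+w) = \sigma(z) + \sigma(w)$ to translate the rational identity on $(X_{\mathfrak m})^{(\pi)} \times (X_{\mathfrak m})^{(\pi)}$ into the required equalities
\[
\xi_i(z+w) = R_i(\xi_1(z), \dots , \eta_{\pi}(z), \xi_1(w), \dots , \eta_{\pi}(w)),
\]
and analogously for each $\eta_j(z+w)$, which is precisely (AAT) in the sense of Definition 2.1. The main obstacle is the first step: verifying rigorously that the meromorphic composition $\beta$, defined initially only on $(X \setminus S)^{(\pi)} \times (X \setminus S)^{(\pi)}$, extends to a bona fide rational map of the whole compact variety, controlling the indeterminacy arising from the fibers of $\varphi^{-1}$ over boundary points of $A$ in $\overline{A}$ and from the points where $\varphi$ itself fails to be biholomorphic. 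Once this extension is secured, the remaining arguments are a purely formal unwinding of the identifications $K \leftrightarrow \kappa \leftrightarrow {\rm Mer}((X_{\mathfrak m})^{(\pi)})$.
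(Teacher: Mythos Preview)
Your overall plan coincides with the paper's: pull the addition on $A$ back through $\varphi$ and land in the rational function field of the projective model $(X_{\mathfrak m})^{(\pi)}\times(X_{\mathfrak m})^{(\pi)}$. The divergence, and the genuine gap, is exactly at the step you yourself flag. You assert that the closure of the graph of $\beta$ in $(X_{\mathfrak m})^{(\pi)}\times(X_{\mathfrak m})^{(\pi)}\times(X_{\mathfrak m})^{(\pi)}$ is analytic and then invoke Chow. But Chow's theorem requires a \emph{closed} analytic subset as input; it says nothing about topological closures of analytic subsets of a Zariski open. Here the graph is analytic only over $(X\setminus S)^{(\pi)}\times(X\setminus S)^{(\pi)}$, and the set across which one must extend has dimension $3\pi-1$, strictly larger than the graph's dimension $2\pi$ whenever $\pi\ge 1$, so Remmert--Stein does not apply and no Bishop-type volume bound is available. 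Without further input there is no reason for the closure to be analytic, and the argument stalls.

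The paper closes this gap by a different mechanism. It works one function $\psi\in\kappa$ at a time and argues \emph{separately} in each variable: for fixed generic $Q$ the map $P\mapsto\psi(\varphi(P)+\varphi(Q))$ extends meromorphically to $(X_{\mathfrak m})^{(\pi)}$, and symmetrically in $Q$. A Hartogs-type theorem for separately meromorphic functions (Shiffman; Jarnicki--Pflug) then produces a joint meromorphic extension to the \emph{smooth} compact product $X^{(\pi)}\times X^{(\pi)}$. Finally, writing ${\rm Mer}(X^{(\pi)}\times X^{(\pi)}) = {\rm Mer}\bigl((X_{\mathfrak m})^{(\pi)}\times(X_{\mathfrak m})^{(\pi)}\bigr)(\widetilde{\xi})$ for a primitive element $\widetilde{\xi}$, one checks that the extended function is independent of $\widetilde{\xi}$ on a large open set and hence already lies in $(\varphi\times\varphi)^{*}\widetilde{\kappa}$. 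Once that is in place, the remainder of your argument (identifying generators of the product function field and pulling back along $\sigma\times\sigma$) goes through as written.
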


\begin{proof}
Letting $(z, w) \in {\mathbb C}^{\pi } \times {\mathbb C}^{\pi }$,
we define
$$
\widetilde{K} := {\mathbb C}(\xi _1(z), \dots , \xi _{\pi }(z), \eta_1 (z), \dots , \eta_{\pi } (z),
\xi _1(w), \dots , \xi _{\pi }(w), \eta _1(w), \dots , \eta _{\pi }(w)).
$$
We have a subfield $\widetilde{\kappa }$ of ${\rm Mer}(A \times A)$ such that
$\widetilde{K} = (\sigma \times \sigma )^{*}\widetilde{\kappa }$.
Take any $\psi \in \kappa $. It suffices to show that
$\psi (\varphi (P) + \varphi (Q)) \in (\varphi \times \varphi )^{*}\widetilde{\kappa }$.
\par
We take $P, Q \in (X \setminus S)^{(\pi )}$ such that $\varphi (P)$ and $\varphi (Q)$
are generic points of $A$.
Fixing $\varphi (Q)$, we consider $\psi (\varphi (P) + \varphi (Q))$ as a function
of $P$. Then it is meromorphically extended to $(X_{{\mathfrak m}})^{(\pi )}$.
Similarly, it is a meromorphic function on $(X_{{\mathfrak m}})^{(\pi )}$ as a
function of $Q$ if we fix $\varphi (P)$. The set of $P \in (X \setminus S)^{(\pi )}$
such that $\varphi (P)$ is not generic is an analytic set of positive codimension.
Furthermore, $X^{(\pi )} \setminus (X \setminus S)^{(\pi )}$
is an analytic subset of $X^{(\pi )}$. Then $\psi (\varphi (P) + \varphi (Q))$ extends meromorphically
to $X^{(\pi )} \times X^{(\pi )}$ (for example, see
Theorem 2 in \cite{ref10} or Corollary 3.4 in \cite{ref7}).
There exists $\widetilde{\xi } \in {\rm Mer}(X^{(\pi )} \times X^{(\pi )})$ such that
${\rm Mer}(X^{(\pi )} \times X^{(\pi )}) = {\rm Mer}((X_{{\mathfrak m}})^{(\pi )} \times
(X_{{\mathfrak m}})^{(\pi )})(\widetilde{\xi })$. Then $\psi (\varphi (P) + \varphi (Q))$
is a rational function of generators of ${\rm Mer}((X_{{\mathfrak m}})^{(\pi )} \times
(X_{{\mathfrak m}})^{(\pi )})$ and $\widetilde{\xi }$.
However, it is independent of $\widetilde{\xi }$ on
$\left((X_{{\mathfrak m}})^{(\pi )} \times (X \setminus S)^{(\pi )}\right)
\cup \left( (X \setminus S)^{(\pi )} \times (X_{{\mathfrak m}})^{(\pi )} \right)$.
Therefore we obtain
$\psi (\varphi (P) + \varphi (Q)) \in (\varphi \times \varphi )^{*}\widetilde{\kappa }$.
\end{proof}

Let $\overline{A}$ be the standard compactification of $A$. Then a W-type subfield
$\sigma ^{*}({\rm Mer}(\overline{A})|_{A})$ is considered as a degenerate
abelian function field, for we have the following theorem.

\begin{theorem}
We have $K \cong \sigma ^{*}({\rm Mer}(\overline{A})|_{A})$,
hence ${\rm Mer}( (X_{{\mathfrak m}})^{(\pi )}) \cong
{\rm Mer}(\overline{A})$.
\end{theorem}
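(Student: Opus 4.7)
The plan is to verify the hypotheses of Theorem 2.5 for $K$ and then identify the resulting W-type structure with the given standard compactification $\overline{A}$. Condition (T) is recorded in the text, and (AAT) is Proposition 5.1, so the remaining hypothesis is non-degeneracy. This is immediate from ${\rm Trans}_{\mathbb{C}}K=\pi$: if the connected component of $\Gamma_K$ contained a positive-dimensional complex linear subspace $V$, every function in $K$ would descend to the quotient $\mathbb{C}^{\pi}/V$ of dimension $<\pi$, contradicting the transcendence degree. Combining with $\Gamma\subset\Gamma_K$ (which holds since $K=\sigma^{*}\kappa$) and the fact that $\kappa$ generically separates points of $A$ (the components of $\widehat{\mu}\circ\varphi^{-1}:A\longrightarrow\mathbb{P}^{\pi}\times\mathbb{P}^{\pi}$ are $\wp^x,\wp^y$, and this map is injective on a dense open subset because $\varphi$ is bimeromorphic and $\widehat{\mu}$ is generically injective), I conclude $\Gamma_K=\Gamma$.

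By Theorem 2.5 there exists a $\mathbb{C}$-linear isomorphism $\Phi:\mathbb{C}^{\pi}\longrightarrow\mathbb{C}^{\pi}$ with $\Phi^{*}K=\widetilde{\sigma}^{*}({\rm Mer}(\overline{G})|_G)$, where $G:=\mathbb{C}^{\pi}/\Phi^{-1}\Gamma$ is equipped with its Remmert-Morimoto decomposition $\mathbb{C}^{p}\times(\mathbb{C}^{*})^{q}\times\mathcal{Q}'$ (with $\mathcal{Q}'$ quasi-abelian of kind 0) and $\widetilde{\sigma}:\mathbb{C}^{\pi}\longrightarrow G$ is the canonical projection. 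The linear $\Phi$ descends to a complex Lie group isomorphism $\overline{\Phi}:G\longrightarrow A$. Since the three Remmert-Morimoto factors are intrinsic to a connected commutative complex Lie group, $\overline{\Phi}$ matches the decompositions of $G$ and $A$ (up to an isomorphism between the kind-0 quasi-abelian factors). Linearity then forces $\overline{\Phi}$ to extend to a biholomorphism $\overline{G}\longrightarrow\overline{A}$ of the standard compactifications: on the $\mathbb{C}^{p}$-part it is linear (so extends projectively), on $(\mathbb{C}^{*})^{q}$ it is a monomial map with unimodular exponent matrix (extending to $(\mathbb{P}^{1})^q$), and on the quasi-abelian part it is a morphism of kind-0 quasi-abelian varieties (extending by the structure theory of \cite{ref4}).

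Using $\sigma\circ\Phi=\overline{\Phi}\circ\widetilde{\sigma}$, I compute $\Phi^{*}K=\Phi^{*}\sigma^{*}\kappa=\widetilde{\sigma}^{*}\overline{\Phi}^{*}\kappa$, and comparing with the W-type expression gives $\overline{\Phi}^{*}\kappa={\rm Mer}(\overline{G})|_G$, hence $\kappa={\rm Mer}(\overline{A})|_A$ and $K=\sigma^{*}({\rm Mer}(\overline{A})|_A)$, as required. For the second conclusion, apply $\varphi^{*}$ to $\kappa$: since $\varphi$ is bimeromorphic, $\varphi^{*}\kappa\cong\kappa$, and since $\overline{A}$ is projective algebraic with $A$ dense open, ${\rm Mer}(\overline{A})\cong{\rm Mer}(\overline{A})|_A$; combining, ${\rm Mer}((X_{\mathfrak{m}})^{(\pi)})=\varphi^{*}\kappa\cong{\rm Mer}(\overline{A})$.

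The main obstacle is the extension step in the second paragraph: verifying that the linear-induced isomorphism $\overline{\Phi}$ extends to a biholomorphism of standard compactifications. The delicate point is that linearity of $\Phi$ must preclude nontrivial mixing between the Remmert-Morimoto factors, and each factor-component of $\overline{\Phi}$ must be of a type that admits the extension (in particular, the $(\mathbb{C}^{*})^{q}$-part requires a unimodular integer exponent matrix). This rests on the intrinsic characterization of the Remmert-Morimoto decomposition together with the structure theory of quasi-abelian varieties of kind 0 from \cite{ref4}.
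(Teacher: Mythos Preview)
Your strategy differs from the paper's in two essential ways, and each introduces a gap.

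\medskip
\textbf{The claim $\Gamma_K=\Gamma$.} You assert that $\widehat{\mu}$ is generically injective, but this is not obvious: the map $X^{(\pi)}\to C^{(\pi)}\to(\mathbb{P}^1)^{(\pi)}\times(\mathbb{P}^1)^{(\pi)}$ forgets the pairing between the $x$- and $y$-coordinates, and one must argue that for a generic unordered tuple $\{(x(P_i),y(P_i))\}$ on $C$ the pairing can be reconstructed from the two multisets $\{x(P_i)\}$ and $\{y(P_i)\}$. This is true (for generic $P_i$ one has $(x(P_i),y(P_j))\notin C$ whenever $i\neq j$), but it requires proof; the paper never makes or uses this claim.

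\medskip
\textbf{The extension of $\overline{\Phi}$ to standard compactifications.} You correctly flag this as the main obstacle, but the argument you give is wrong. A general linear automorphism of $\mathbb{C}^p$ extends to $\mathbb{P}^p$, not to $(\mathbb{P}^1)^p$, which is the factor appearing in the standard compactification; for instance $(u_1,u_2)\mapsto(u_1+u_2,u_2)$ does not extend to $\mathbb{P}^1\times\mathbb{P}^1$. Likewise a monomial automorphism $(w_1,w_2)\mapsto(w_1w_2,w_2)$ of $(\mathbb{C}^*)^2$ is only birational on $(\mathbb{P}^1)^2$, with an indeterminacy point at $(0,\infty)$. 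So a biholomorphism $\overline{G}\to\overline{A}$ need not exist. What you actually need is the weaker statement that $\overline{\Phi}^*$ carries ${\rm Mer}(\overline{A})|_A$ onto ${\rm Mer}(\overline{G})|_G$; this is true (rational functions in the $z_i$ and $w_j$ are preserved under linear, respectively monomial, substitutions), but it is not what you argued.

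\medskip
\textbf{How the paper proceeds.} The paper avoids both issues. It does not attempt to show $\Gamma_K=\Gamma$; instead it introduces the intermediate field $\widetilde{K}=\sigma^*(\varphi^{-1})^*\bigl({\rm Mer}(X^{(\pi)})|_{(X\setminus S)^{(\pi)}}\bigr)$, observes $K\subset\widetilde{K}$ and (via Proposition~7.1 of \cite{ref6}) $\sigma^*({\rm Mer}(\overline{A})|_A)\subset\widetilde{K}$, and then uses the transcendence-degree criterion of Proposition~3 in \cite{ref5} to conclude that both $A\to\mathbb{C}^\pi/\Gamma_{\widetilde{K}}$ and $\mathbb{C}^\pi/\Gamma_{\widetilde{K}}\to\mathbb{C}^\pi/\Gamma_K$ are isogenies. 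Since isogenies are fibre-preserving (Proposition~2 in \cite{ref5}), they induce isomorphisms of the fields ${\rm Mer}(\overline{\,\cdot\,})$, giving $K\cong\sigma^*({\rm Mer}(\overline{A})|_A)$ without ever comparing $\Gamma_K$ and $\Gamma$ directly and without any compactification-extension argument.
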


\begin{proof}
Since $K$ admits {\rm (AAT)} (Proposition 5.1), there exists a
${\mathbb C}$-linear isomorphism $\Phi : {\mathbb C}^{\pi }
\longrightarrow {\mathbb C}^{\pi }$ such that 
$K_0 := \Phi ^{*}K$ is a W-type subfield by Theorem 2.5.
Let $\Gamma _{K_0}$ be the period group of $K_0$. Then we have
$$ B := {\mathbb C}^{\pi }/\Gamma _{K_0} = {\mathbb C}^{p'}
\times ({\mathbb C}^{*})^{q'} \times {\mathcal Q}',$$
where ${\mathcal Q}' = {\mathbb C}^{r'}/(\Gamma _{K_0})_0$ is an
$r'$-dimensional
quasi-abelian variety of kind 0 and $p' + q' + r' = \pi $.
Furthermore we have 
$K_0 = \sigma _{K_0}^{*}({\rm Mer}(\overline{B})|_{B})$ by
the definition of W-type subfields, where 
$\sigma _{K_0} : {\mathbb C}^{\pi } \longrightarrow B$ is the projection.
\par
From $K \subset \sigma ^{*}{\rm Mer}(A)$ and $A = {\mathbb C}^{\pi }/\Gamma $
it follows that $\Gamma \subset \Gamma _{K}$.
We note $\Phi (\Gamma _K) = \Gamma _{K_0}$. We set 
$\widetilde{\kappa } := (\varphi ^{-1})^{*}\left( {\rm Mer}(X^{(\pi )})|_
{(X \setminus S)^{(\pi )}}\right) \subset {\rm Mer}(A)$.
Since $\varphi ^{*}\kappa = {\rm Mer}((X_{{\mathfrak m}})^{(\pi )})|_
{(X \setminus S)^{(\pi )}} \subset {\rm Mer}(X^{(\pi )})|_{(X \setminus S)^{(\pi )}}$,
we have $\kappa \subset \widetilde{\kappa }$.
If we set $\widetilde{K} := \sigma ^{*}\widetilde{\kappa }$, then we have
$K \subset \widetilde{K} \subset \sigma ^{*}{\rm Mer}(A)$. Therefore,
$\Gamma \subset \Gamma _{\widetilde{K}} \subset \Gamma _K$,
where $\Gamma _{\widetilde{K}}$ is the period group of
$\widetilde{K}$. Hence we obtain the following sequence of homomorphisms:
$$\begin{CD}
A @>\tau _1>> {\mathbb C}^{\pi }/\widetilde{K} @>\tau _2>>
{\mathbb C}^{\pi }/\Gamma _{K}.\\
\end{CD}
$$
Since ${\rm Trans}_{{\mathbb C}}K = {\rm Trans}_{{\mathbb C}}\widetilde{K}
= \pi $, $\tau _2 : {\mathbb C}^{\pi }/\widetilde{K} \longrightarrow 
{\mathbb C}^{\pi }/\Gamma _{K}$ is an isogeny (Proposition 3 in \cite{ref5}).
By Proposition 7.1 in \cite{ref6} we have
$\varphi ^{*}({\rm Mer}(\overline{A})|_{A}) \subset {\rm Mer}(X^{(\pi )})|_
{(X \setminus S)^{(\pi )}}$. Then we obtain
${\rm Mer}(\overline{A})|_{A} = (\varphi ^{-1})^{*}(\varphi ^{*}({\rm Mer}(
\overline{A})|_{A}) \subset \widetilde{\kappa }$.
Therefore, we have $\sigma ^{*}({\rm Mer}(\overline{A})|_{A}) \subset
\widetilde{K}$. Since $\sigma ^{*}({\rm Mer}(\overline{A})|_{A}) $ is a W-type
subfield, we have 
$${\rm Trans}_{{\mathbb C}}\sigma ^{*}({\rm Mer}(\overline{A})|_{A}) 
= {\rm Trans}_{{\mathbb C}} \widetilde{K} = \pi .$$
The period group of $\sigma ^{*}({\rm Mer}(\overline{A})|_{A})$ is
$\Gamma $. Then $\tau _1 : A = {\mathbb C}^{\pi }/\Gamma \longrightarrow
{\mathbb C}^{\pi }/\Gamma _{\widetilde{K}}$ is also an isogeny by
Proposition 3 in \cite{ref5}. Therefore, 
$\tau _2 \circ \tau _1 : A \longrightarrow {\mathbb C}^{\pi }/\Gamma _K$
is an isogeny. We note that $B$ and ${\mathbb C}^{\pi }/\Gamma _K$ are
isogenous for $K_0 = \Phi ^{*}K$. Thus we see that $A$ and $B$ are isogenous.
Since an isogeny is fiber preserving (Proposition 2 in \cite{ref5}), we have
${\rm Mer}(\overline{A})|_{A} \cong {\rm Mer}(\overline{B})|_{B}$ and 
${\rm Mer}(\overline{B})|_{B} \cong {\rm Mer}\left( \overline{
({\mathbb C}^{\pi }/\Gamma _K)}\right)\bigr|_{{\mathbb C}^{\pi }/\Gamma _K}$.
Hence we obtain $K \cong \sigma ^{*}{\rm Mer}(\overline{A})|_{A}$.
\end{proof}

The generators $\{ \xi _i (z), \eta _i(z) ; i = 1, \dots , \pi \}$ is just the
fundamental degenerate abelian functions belonging to
${\rm Mer}(X_{{\mathfrak m}})$ as in the non-singular case.



\begin{thebibliography}{9}
\bibitem{ref1}
Y. Abe,
Meromorphic functions
admitting an algebraic addition theorem,
Osaka J. Math.,
\textbf{36}
(1999),      
343--363.      

\bibitem{ref2}
Y. Abe,
A statement of Weierstrass on meromorphic functions
which admit an algebraic addition theorem,
J. Math. Soc. Japan,
\textbf{57}
(2005),      
709--723.      

\bibitem {ref3} Y. Abe, 
Explicit representation of degenerate abelian
functions and related topics,
Far East J. Math. Sci.,
\textbf{70}
(2012),
321--336.

\bibitem {ref4} Y. Abe,
Toroidal Groups, 
Yokohama Publishers, Inc., Yokohama, 2018.

\bibitem {ref5} Y. Abe,
Meromorphic function fields closed by partial derivatives,
to appear in Acta Sci. Math. (Szeged).

\bibitem {ref6} Y. Abe,
Analytic study of singular curves,
preprint, arXiv: 1609.04517.

\bibitem {ref7} M. Jarnicki and P. Pflug, 
An extension theorem for separately meromorphic functions
with pluripolar singularities,
Kyushu J. Math.,
\textbf{57}
(2003),
291--302.

\bibitem {ref8} P. Painlev\'e,
Sur les fonctions qui admettent an th\'eor\`eme
d'addition,
Acta Math.,
\textbf{27}
(1903),
1--54.

\bibitem {ref9} J.-P. Serre, 
Groupes alg\'ebriques et corps de classes,
Hermann, Paris, 1959.

\bibitem{ref10}
B. Shiffmann,
Separately analyticity and Hartogs theorems,
Indiana Univ. Math. J.,
\textbf{38} 
(1989),      
943--957.      


\end{thebibliography}
\end{document}